\newtheorem{proposition}{Proposition}[section]
\newtheorem{remark}{Remark}[section]
\numberwithin{equation}{section}
\newcommand{\mr}{\mathbb{R}}
\begin{document}
\title[An exact solution for geophysical trapped waves]
{An exact solution for geophysical trapped waves in the presence of an underlying current}%
\author[Lili Fan]{Lili Fan}%
\address[Lili Fan]{College of Mathematics and Information Science,
Henan Normal University, Xinxiang 453007, China}
\email{fanlily89@126.com}
\author[Hongjun Gao$^{\dag}$]{Hongjun Gao$^{\dag}$}
\address[Hongjun Gao]{School of Mathematical Sciences, Institute of Mathematics,
 Nanjing Normal University, Nanjing 210023, China;  \ Jiangsu Center for Collaborative Innovation in Geographical Information Resource Development and Application,  Nanjing Normal University, Nanjing 210023,  China}
\email{gaohj@njnu.edu.cn\, (Corresponding author)}
\author[Qingkun Xiao]{Qingkun Xiao}
\address[Qingkun Xiao]{College of Sciences, Nanjing Agricultural University, Nanjing 210095, China}
\email{xiaoqk@njau.edu.cn}


\begin{abstract}In this paper we propose an exact and explicit nonlinear solution to the governing equations which retains all the Coriolis terms. In the presence of an underlying current, we seek the trapped waves induced by these solutions in Northern and Southern hemisphere respectively, showing that the retention of the Coriolis force in the governing equations affects significantly the range of the admissible following and adverse currents.
\end{abstract}

\date{}

\maketitle

\noindent {\sl Keywords\/}: Exact solution, Lagrangian variables; Coriolis force, Wave-current interactions.

\vskip 0.2cm

\noindent {\sl AMS Subject Classification} (2010): 76B15; 74G05; 37N10. \\

\section{Introduction}
\large
This paper aims at finding an exact solution for geophysical trapped waves with an underlying current. The trapped waves we consider in this paper are waves for whose amplitude decays rapidly in the meridional direction and geophysical ocean waves are those which take the Coriolis effects on the fluid body induced by the earth's rotation into account, and accordingly the additional terms are brought by the Coriolis force to the governing equations. These governing equations are applicable for a wide range of oceanic and atmospheric flows \cite{CB,Va}. Due to the complexity and intractability of these equations, some simpler approximate models for Equatorial water waves are instigated in recent years to mitigate the Coriolis terms, among which are the so-called $\beta$-plane and $f$-plane approximations.

The $\beta$-plane approximation introduces a linear variation with latitude of the Coriolis parameter to allow for the variation of the Coriolis force from point to point. This approximation applies in regions within $5^\circ$ latitude of the Equator \cite{CB,GS}. On account of the lost of an appreciable level of mathematical detail and structure from the $\beta$-plane approximation as a result of the 'flattening out' of the earth's surface, a number of interesting mathematical models have been recently proposed to retain some of this structure \cite{CoJ15,CoJ16,CoJ,D1,D2}. Whereas the $f$-plane approximation takes a constant Coriolis parameter into account, for which the latitudinal variations are ignored and this approximation has been applied to oceanic flows within a restricted meridional range of approximately $2^\circ$ latitude from the Equator \cite{Co5,CB}.

In this paper, we consider surface water waves propagating zonally in a relatively narrow ocean strip less than a few degrees of latitude wide, where we put no restriction on the latitude, and so the $f$-plane approximation with the Coriolis parameters $f=2\Omega\sin\phi$, $\hat{f}=2\Omega\cos\phi$ being constant is appropriate \cite{CoM}. In consideration of the significant features and complexifications of the underlying currents, both mathematically and physically, in the geophysical dynamics \cite{Co,CoJ15,CoJ2017,CoM,CB,D3,D1,D2,Mo}, we allow for the underlying background currents in the flow. It is hoped that the additional physical complexity brought by the constant Coriolis parameters and underlying currents may be beneficial with respect to potential generalisations in this direction.

To describe the nonlinear dynamic of the given complex fluid flows in detail,
it is remarkable to find an exact solution to the water wave problem. The explicit exact solution of the governing equations for periodic two-dimensional travelling gravity water waves was first discovered by Gerstner \cite{G}, with significant modifications to incorporate geophysical effects along the lines of \cite{Co2}. It is interesting the Gerstner's solution can also be modified to describe edge-waves propagating over a sloping bed \cite{Co1}. Subsequent to \cite{Co1,Co2,G}, a vast and wide-ranging variety of Gerstner-type exact and explicit solutions were derived and analysed to model a number of different physical and geophysical scenarios cf. \cite{Co2,Co3,Co4,Go3,D3,Io,Ma1,Stu1,Stu2} ect. Considering the $f$-plane approximation, Pollard modified the Gerstner's wave solution to describe free surface waves without any restriction on the latitude in a rotating fluid \cite{Po}. Recently, the Pollard's approach has been applied for equatorial waves \cite{Io1} and flows with currents \cite{CoM}, and the instability of the solution has also been investigated in \cite{Io2}.

In sprit of \cite{Co2}, we provide an inherently three-dimensional explicit Gerstner's type solution which incorporates an underlying current to the $f$-plane governing equations, where Constantin and Monismith gave a Pollard's type solution to this water wave problem recently in \cite{CoM}. The solution we obtained  describes in the Lagrangian framework geophysical trapped waves at arbitrary latitude that propagate westwards or eastwards in the presence of a constant underlying background current, which is an extension of the exact solution for equatorial waves in the $f$-plane approximation near the Equator \cite{Hs}. The dispersion relation \eqref{3.15} of this Gerstner's type solution, characterised by the reciprocal Coriolis parameter $\hat{f}$ and the underlying current, has a difference of $O(\Omega^2)$ compared with the dispersion relation of the corresponding Pollard's type solution \cite{CoM}. Another characteristic of the solution is the direct impact induced by the underlying current $c_0$, the Coriolis parameter $f$ and the reciprocal Coriolis parameter $\hat{f}$ on the vorticity. Moreover, we conclude from the investigation of the free-surface interface that the wave motions in such flow admit both the admissible following and adverse currents, for which the range is influenced by the Coriolis parameter $f$ and the dispersion relation. In particular, for water waves propagating zonally near the Equator and covering both the northern and southern hemispheres, this admission allows for both flows with wave speed $c=c_+>0$ or $c=c_-<0$, which is an extension of \cite{D3}.

The remainder of this paper is organized as follows.  In Section 2, we present the governing equations for the geophysical trapped waves with an underlying current. In Section 3, we propose the exact solution to the governing equations with all the Coriolis force retained together with an investigation on the vorticity, the dispersion relation, the free-surface interface and the stratification.
\section{The governing equations}
\large
Concerning surface water waves propagating zonally in a relatively narrow ocean strip less than a few degrees of latitude wide, we take the Coriolis parameters
\begin{equation}\label{2.1}
f=2\Omega\sin\phi,\quad \hat{f}=2\Omega\cos\phi
\end{equation}
as constant. Here $\phi$ represents the latitude and $\Omega=7.29\times10^{-5}$ rad/s is the rotational speed of the Earth. In a reference frame with the origin located at a point fixed on Earth's surface and rotating with the Earth, we consider the zonal coordinate $x$ pointing east, the meridional coordinate $y$ pointing north and the vertical coordinate $z$ pointing up. Then the governing equations in the $f$-plane approximation we solve are given by \cite{CoM,CB,P}
\begin{equation}\label{2.2a}
\begin{cases}
u_t+uu_x+vu_y+wu_z+\hat{f}w-fv&=-\frac 1 \rho P_x,\nonumber\\
v_t+uv_x+vv_y+wv_z+fu&=-\frac 1 \rho P_y,\nonumber\\
w_t+uw_x+vw_y+ww_z-\hat{f}u&=-\frac 1 \rho P_z-g,\nonumber\tag{2.2a}
\end{cases}
\end{equation}
together with the equation of incompressibility
\begin{equation}\label{2.2b}
\nabla  \cdot U=0,\tag{2.2b}
\end{equation}
and with the equation of mass conservation
\begin{equation}\label{2.2c}
\frac {D\rho} {Dt}=0.\tag{2.2c}
\end{equation}
Here $t$ is the time, $U=(u,v,w)$ is the fluid velocity, $g=9.81$ m/s is the gravitational acceleration at the Earth's surface, $\rho$ is the constant density of fluid (a discussion on stratified flows can be seen in Section 3.4), $P$ is the pressure distribution and $\frac {D} {Dt}$ is the material derivative.

The boundary conditions for the fluid on the free-surface $\eta$ are given by
\begin{align}
w&=\eta_t+u\eta_x+v\eta_y,\tag{2.2d}\label{2.2d}\\
P&=P_0,\tag{2.2e} \label{2.2e}
\end{align}
where $P_0$ is the constant atmosphere pressure. The kinematic boundary condition express the fact that the same particles always form the free surface. Finally, we assume the water to be infinitely deep, with the flow converging rapidly with depth to a uniform zonal current, that is,
\begin{equation}\label{2.2f}
(u,v,w) \rightarrow (-c_0,0,0)\;as\;z\rightarrow -\infty.\tag{2.2f}
\end{equation}

\section{Exact solution}
\large
In this section we define an exact solution of the governing equations (2.2). Let us re-express the Euler equation \eqref{2.2a} in the following form
\begin{equation}\label{3.1}
\begin{cases}
\frac {Du} {Dt}+\hat{f}w-fv&=-\frac 1 \rho P_x,\\
\frac {Dv} {Dt}+fu&=-\frac 1 \rho P_y,\\
\frac {Dw} {Dt}-\hat{f}u&=-\frac 1 \rho P_z-g.
\end{cases}
\end{equation}
The Lagrangian framework is adequate for the exact solution. In this framework, the Eulerian coordinates of fluid particles $(x,y,z)$ at time $t$ are expressed as functions of Lagrangian labelling variables $(q,s,r)$, which specify the fluid particle. On the lines of \cite{Co2}, we suppose that the position of a particle at time $t$ is given by
\begin{equation}\label{3.2}
\begin{cases}
x=q-c_0t-\frac 1 k e^{k[r-m(s)]}\sin[k(q-ct)],\\
y=s,\\
z=r+\frac 1 k e^{k[r-m(s)]}\cos[k(q-ct)],
\end{cases}
\end{equation}
where $k$ is the wavenumber and the $c_0$ term represents a constant underlying current such that for $cc_0>0$ the current is adverse, while for $cc_0<0$ the current is following. The expressions of the travelling speed $c$ and the function $m$ depending on $s$ are determined below such that \eqref{3.2} defines an exact solution of the governing equations (2.2). The label domain is given by real values of $(q,s,r)\in(\mr,[-s_0,s_0],(-\infty,r_0])$ such that
\begin{equation}\label{3.3}
r-m(s)\leq r_0<0
\end{equation}
to ensure the flow has the appropriate decay properties.

For notational convenience, let us choose
\begin{equation}\label{3.4}
\xi=k[r-m(s)],\quad \theta=k(q-ct),
\end{equation}
and accordingly the Jacobian matrix of the transformation \eqref{3.2} is given by
\begin{equation}\label{3.5}
J=
\begin{pmatrix}
\frac {\partial x}{\partial q}&\frac {\partial y}{\partial q}&\frac {\partial z}{\partial q}\\
\frac {\partial x}{\partial s}&\frac {\partial y}{\partial s}&\frac {\partial z}{\partial s}\\
\frac {\partial x}{\partial r}&\frac {\partial y}{\partial r}&\frac {\partial z}{\partial r}\\
\end{pmatrix}
=
\begin{pmatrix}
1-e^{\xi}\cos\theta&0&-e^{\xi}\sin\theta\\
m_se^{\xi}\sin\theta&1&-m_se^{\xi}\cos\theta\\
-e^{\xi}\sin\theta&0&1+e^{\xi}\cos\theta
\end{pmatrix}.
\end{equation}
The velocity and acceleration of a particle can also be calculated directly from \eqref{3.2} as
\begin{equation}\label{3.6}
\begin{cases}
u=\frac {Dx} {Dt}=-c_0+c e^{\xi}\cos\theta,\\
v=\frac {Dy} {Dt}=0,\\
w=\frac {Dz} {Dt}=c e^{\xi}\sin\theta,
\end{cases}
\end{equation}
and
\begin{equation}\label{3.7}
\begin{cases}
\frac {Du} {Dt}=kc^2 e^{\xi}\sin\theta,\\
\frac {Dv} {Dt}=0,\\
\frac {Dw} {Dt}=-kc^2 e^{\xi}\cos\theta,
\end{cases}
\end{equation}
respectively. We can therefore write \eqref{3.1} as
\begin{equation}\label{3.8}
\begin{cases}
P_x=-\rho(kc^2 e^{\xi}\sin\theta+\hat{f}ce^{\xi}\sin\theta),\\
P_y=-\rho(-fc_0+fce^{\xi}\cos\theta),\\
P_z=-\rho(-kc^2 e^{\xi}\cos\theta+\hat{f}c_0-\hat{f}ce^{\xi}\cos\theta+g).
\end{cases}
\end{equation}
The change of variables
\begin{equation}\label{3.9}
\begin{pmatrix}
P_q\\P_s\\P_r
\end{pmatrix}
=\begin{pmatrix}
\frac {\partial x}{\partial q}&\frac {\partial y}{\partial q}&\frac {\partial z}{\partial q}\\
\frac {\partial x}{\partial s}&\frac {\partial y}{\partial s}&\frac {\partial z}{\partial s}\\
\frac {\partial x}{\partial r}&\frac {\partial y}{\partial r}&\frac {\partial z}{\partial r}\\
\end{pmatrix}
\begin{pmatrix}
P_x\\P_y\\P_z
\end{pmatrix}\\
\end{equation}
transforms \eqref{3.8} into
\begin{equation}\label{3.10}
\begin{cases}
P_q=-\rho(kc^2 +\hat{f}c-\hat{f}c_0-g)e^{\xi}\sin\theta,\\
P_s=-\rho[m_s(kc^2+\hat{f}c)e^{2\xi}+(fc-\hat{f}c_0m_s-gm_s)e^{\xi}\cos\theta-fc_0 ],\\
P_r=-\rho[-(kc^2+\hat{f}c)e^{2\xi}-(kc^2 +\hat{f}c-\hat{f}c_0-g)e^{\xi}\cos\theta+\hat{f}c_0+g].
\end{cases}
\end{equation}

We prescribe now a suitable pressure function such that \eqref{3.10} holds, proving thus that \eqref{3.2} is indeed an exact solution of the governing equations \eqref{2.2a}. Choosing the expression
\begin{equation}\label{3.11}
m(s)=\frac {fc} {\hat{f}c_0+g} s\geq0
\end{equation}
with
\begin{equation}\label{3.12}
c_0>-\frac g {\hat{f}},
\end{equation}
since otherwise we would have
\begin{equation}\label{3.13}
c_0\leq -\frac g {\hat{f}} \leq -\frac g {2\Omega}\approx-6.7\times 10^{-4} m/s,
\end{equation}
a scenario we can exclude on physical grounds, and accordingly
\begin{align}\label{3.14}
P=&\rho\left[(kc^2 +\hat{f}c-\hat{f}c_0-g)e^{\xi}\cos\theta+\frac {kc^2 +\hat{f}c} {2k}e^{2\xi}+fc_0s-(\hat{f}c_0+g)r\right]\nonumber\\
&+P_0+const,
\end{align}
we observe that equation \eqref{3.10} are satisfied. The consideration of the nonnegativity of $m(s)$ is to ensure a decay in fluid particle motion in the meridional direction (see discussion in Sec 3.3). The pressure must be time independent on the surface due to the dynamic boundary condition \eqref{2.2e}, we thus need to eliminate terms containing $\theta$ in \eqref{3.14}. Therefore
\begin{equation}\label{3.15}
kc^2 +\hat{f}c-\hat{f}c_0-g=0.
\end{equation}
Using \eqref{3.15} in relation \eqref{3.14}, we obtain that the choice of pressure function
\begin{align}\label{3.16}
&P(r,s)=\nonumber\\
&\rho\left[\frac {\hat{f}c_0+g} {2k}e^{2\xi}+fc_0s-(\hat{f}c_0+g)r\right]
+P_0-\rho\left[\frac {\hat{f}c_0+g} {2k}e^{2kr_0}-(\hat{f}c_0+g)r_0\right]
\end{align}
in conjunction with the flow determined by \eqref{3.2} satisfies the governing equations \eqref{2.2a}. The constant terms from the above expression have been chosen to ensure the conditions \eqref{2.2d} and \eqref{2.2e} hold on the free surface.
\subsection{The vorticity}
\large
The inverse of the Jacobian matrix \eqref{3.5} can be computed as
\begin{equation}\label{3.17}
J^{-1}
=\frac 1 {1-e^{2\xi}}
\begin{pmatrix}
1+e^{\xi}\cos\theta&0&e^{\xi}\sin\theta\\
-m_se^{\xi}\sin\theta&1&m_s(e^{\xi}\cos\theta-e^{2\xi})\\
e^{\xi}\sin\theta&0&1-e^{\xi}\cos\theta
\end{pmatrix},
\end{equation}
and thus we can get the velocity gradient tension
\begin{equation}\label{3.18}
\begin{aligned}
\nabla U&=
\begin{pmatrix}
\frac {\partial u}{\partial x}&\frac {\partial u}{\partial y}&\frac {\partial u}{\partial z}\\
\frac {\partial v}{\partial x}&\frac {\partial v}{\partial y}&\frac {\partial v}{\partial z}\\
\frac {\partial w}{\partial x}&\frac {\partial w}{\partial y}&\frac {\partial w}{\partial z}
\end{pmatrix}
=\frac {dJ} {dt} J^{-1}\\
&=\frac {cke^{\xi}} {1-e^{2\xi}}
\begin{pmatrix}
-\sin\theta&m_s(e^{\xi}-\cos\theta)&-e^{\xi}+\cos\theta\\
0&0&0\\
\cos\theta+e^{\xi}&-m_s\sin\theta&\sin\theta
\end{pmatrix},
\end{aligned}
\end{equation}
from which the vorticity $w=(w_y-v_z, u_z-w_x,v_x-u_y)$ is obtained as
\begin{equation}\label{3.19}
\omega=\left(-\frac {kfc^2} {\hat{f}c_0+g} \frac {e^{\xi}\sin\theta} {1-e^{2\xi}},
-\frac {2kce^{2\xi}} {1-e^{2\xi}}, \frac {kfc^2} {\hat{f}c_0+g} \frac{e^{\xi}\cos\theta-e^{2\xi}} {1-e^{2\xi}}\right).
\end{equation}
Now, we give some notations on the vorticity \eqref{3.19}.

\begin{remark}
1. Though the velocity field \eqref{3.6} for the solution \eqref{3.2} is two-dimensional, the vorticity \eqref{3.19} is (weakly) three-dimensional away from the equator (for which $f=0$), with the first and third components depending on the latitude $\phi$. The impact of the latitude $\phi$ on the second component appears implicitly in the dispersion relation \eqref{3.42}.

2. The underlying current $c_0$ features directly in the expression for $\omega$, whereas it plays an implicit role in \cite{D1,D2}.
\end{remark}

\subsection{The dispersion relations}
\large
The relation \eqref{3.15} has implications  in  determining the dispersion relation for the flow by regarding \eqref{3.15} as a quadratic in $c$. To solve it, we enforce
\begin{equation}\label{3.41}
\Delta=\hat{f}^2+4k(\hat{f}c_0+g)> 0,
\end{equation}
which holds automatically by a glance at \eqref{3.12}. The dispersion relation can then be computed as
\begin{equation}\label{3.42}
c_{\pm}=\frac {-\hat{f}\pm\sqrt{\Delta}}{2k},
\end{equation}
featuring contributions from the reciprocal Coriolis parameter $\hat{f}$ and the underlying current $c_0$. If $c=c_+>0$, the wave travels from the west to east and if $c=c_-<0$, it travels from east to west. We now give some remarks on the relations \eqref{3.42}.

\begin{remark}

1. If $c_0=0$, then the dispersion relation reduces to
\begin{equation}\label{3.43}
c_{\pm}=\frac {-\hat{f}\pm\sqrt{\hat{f}^2+4kg}}{2k},
\end{equation}
which corresponds to Gerstner waves that are very slightly modified by rotation. Recalling the dispersion relation of Pollard's waves as \cite{CoM,Po}
\begin{equation}\label{3.44}
(k^2c^2-f^2)c^2=(g-\hat{f}c)^2,
\end{equation}
we obtain the equivalent forms as
\begin{equation}\label{3.45}
k^2c^4-f^2c^2=(g-\hat{f}c)^2
\end{equation}
and
\begin{equation}\label{3.46}
k^2c^4-4\Omega^2c^2+2g\hat{f}c-g^2=0.
\end{equation}
The relation \eqref{3.46} tells us that the Coriolis parameter $f$ does no feature in the expression of the dispersion, which is in accordance with the result \eqref{3.15} in this paper. On the other hand, we get from \eqref{3.15} that in the case $c_0=0$
\begin{equation}\label{3.47}
k^2c^4=(g-\hat{f}c)^2,
\end{equation}
and accordingly the difference of the two dispersion \eqref{3.47} and \eqref{3.45} is $f^2c^2$ ($\sim O(\Omega^2)$), which is relatively much smaller than the terms $\hat{f}c$ ($\sim O(\Omega)$).

2. For equatorial waves we have $f=0$ and $\hat{f}=2\Omega$, so that the constraint \eqref{3.12} reduces to
\begin{equation}\label{3.48}
c_0>-\frac g {2\Omega},
\end{equation}
which agrees with the restriction in \cite{D3,D2}. Besides, the relation \eqref{3.42} reduces to
\begin{equation}\label{3.49}
c_{\pm}=\frac {-\Omega\pm\sqrt{\Omega^2+k(g+2\Omega c_0)}}{k},
\end{equation}
which recovers the result obtained by Constantin in \cite{CoM}, and a further assumption $c_0=0$ leads to the result of Hsu \cite{Hs}.

3. In the case $c_0=c$, we get from \eqref{3.15} that $c=\pm\sqrt{\frac g k}$, which resembles that of both Gerstner's wave and gravity waves in deep-water \cite{Co}. This dispersion relation is also obtained by ignoring the Earth's rotation ($\Omega=0$), in which case the solution \eqref{3.2} effectively reduces to Gerstner's two-dimensional gravity water waves.

\end{remark}
\subsection{The free-surface interface}
\large
By considering the boundary conditions \eqref{2.2d} and \eqref{2.2e}, we now investigate for which values of the current $c_0$ this flow is hydrodynamically possible. This will be achieved by proving that, for each fixed latitude $s$ and $\phi$, there exists a unique solution $r(s)\leq r_0<0$ such that $P(r(s),s)=P_0$ in \eqref{3.16}, which is equivalent to
\begin{equation}\label{3.20}
h(r(s),s)=\frac {\hat{f}c_0+g} {2k}e^{2kr_0}-(\hat{f}c_0+g)r_0,
\end{equation}
where
\begin{equation}\label{3.21}
h(r(s),s)=\frac {\hat{f}c_0+g} {2k}e^{2k[r-\frac{fcs}{\hat{f}c_0+g}]}+fc_0s-(\hat{f}c_0+g)r.
\end{equation}
For $s=0$, we have
\begin{equation}\label{3.22}
h(r,0)=\frac {\hat{f}c_0+g} {2k}e^{2kr}-(\hat{f}c_0+g)r
\end{equation}
and so $r(0)=r_0$. For $s\neq 0$ fixed
\begin{align}\label{3.23}
h_r&=(\hat{f}c_0+g)e^{2k[r-\frac{fcs}{\hat{f}c_0+g}]}-(\hat{f}c_0+g)r\nonumber\\
&=(\hat{f}c_0+g)[e^{2k[r-\frac{fcs}{\hat{f}c_0+g}]}-1]<0
\end{align}
by \eqref{3.3} and \eqref{3.12}. Hence, $h$ is a decreasing function of $r$. Since
\begin{equation}\label{3.24}
\lim_{r\rightarrow-\infty}h(r,s)=\infty,
\end{equation}
if the following inequality holds
\begin{align}\label{3.25}
\lim_{r\rightarrow r_0}h(r,s)
&=\frac{\hat{f}c_0+g}{2k}e^{2k[r_0-\frac{fcs}{\hat{f}c_0+g}]}+fc_0s-(\hat{f}c_0+g)r_0\nonumber\\
&<\frac{\hat{f}c_0+g}{2k}e^{2kr_0}-(\hat{f}c_0+g)r_0,
\end{align}
then the equation \eqref{3.20} has a unique solution. The inequality \eqref{3.25} takes the form
\begin{equation}\label{3.26}
\frac{\hat{f}c_0+g}{2k}e^{2kr_0}\left[e^{-2k\frac{fcs}{\hat{f}c_0+g}}-1\right]+fc_0s
:=A_1(s)+A_2(s)<0,
\end{equation}
where
\begin{equation}\label{3.27}
A_1(s)=\frac{\hat{f}c_0+g}{2k}e^{2kr_0}\left[e^{-2k\frac{fcs}{\hat{f}c_0+g}}-1\right]<0,
\end{equation}
by \eqref{3.11} and \eqref{3.12}. Differentiate \eqref{3.20} with respect to $s$ to get that
\begin{equation}\label{3.28}
h_s=(\hat{f}c_0+g)r'(s)
\left[e^{2k[r-\frac{fcs}{\hat{f}c_0+g}]}-1\right]
+f\left[c_0-ce^{2k[r-\frac{fcs}{\hat{f}c_0+g}]}\right]=0,
\end{equation}
which by the way of \eqref{3.12} gives
\begin{equation}\label{3.29}
r'(s)=\frac f {\hat{f}c_0+g}\left[\frac {c_0-ce^{2k[r-\frac{fcs}{\hat{f}c_0+g}]}} {1-e^{2k[r-\frac{fcs}{\hat{f}c_0+g}]}}\right].
\end{equation}
Bearing in mind that we are seeking geophysical trapped waves, we set
\begin{equation}\label{3.30}
\begin{cases}
r'(s)<0,\quad \text{for}\;s>0,\\
r'(s)>0,\quad \text{for}\;s<0,
\end{cases}
\end{equation}
i.e.
\begin{equation}\label{3.31}
\begin{cases}
f[c_0-ce^{2k[r-\frac{fcs}{\hat{f}c_0+g}]}]<0,\quad \text{for}\;s>0,\\
f[c_0-ce^{2k[r-\frac{fcs}{\hat{f}c_0+g}]}]>0,\quad \text{for}\;s<0,
\end{cases}
\end{equation}
by \eqref{3.3} and \eqref{3.12}. Hence, by $r(0)=r_0$ and \eqref{3.3}, we must necessarily have that
\begin{equation*}
r'(s)-m'(s)=r'(s)-\frac{fc}{\hat{f}c_0+g}
\begin{cases}
<0,\quad \text{for}\;s>0,\\
>0,\quad \text{for}\;s<0,
\end{cases}
\end{equation*}
and accordingly $m(s)=\frac{fcs}{\hat{f}c_0+g}\geq0$ is required. Combining \eqref{3.31}, \eqref{3.26}, \eqref{3.11} with \eqref{3.12}, we obtain the restrictions needed for the hydrodynamical possibility of the flow as
\begin{align}
&fcs>0,\label{3.32}\\
&\frac{\hat{f}c_0+g}{2k}e^{2kr_0}[e^{-2k\frac{fcs}{\hat{f}c_0+g}}-1]+fc_0s
=A_1+A_2<0,\label{3.33}\\
&\begin{cases}
f[c_0-ce^{2k[r-\frac{fcs}{\hat{f}c_0+g}]}]<0,\quad \text{for}\;s>0,\\
f[c_0-ce^{2k[r-\frac{fcs}{\hat{f}c_0+g}]}]>0,\quad \text{for}\;s<0.
\end{cases}\label{3.34}
\end{align}
A detailed discussion on the above restrictions in Northern and Southern hemisphere respectively leads to the following results.
\begin{proposition}\label{pro3.1}
The fluid motion prescribed by \eqref{3.2} represents an exact solution to the governing equations (2.2) in cases that

I. In Northern hemisphere

I-1. The flows with the wave phase speed $c=c_+>0$ admit following currents $-\frac g {\hat{f}}<c_0\leq0$ for $s>0$ and adverse currents $0<c_0<ce^{2kr_0}$ for some $s>0$ close to zero.

I-2. The flows with the wave phase speed $c=c_-<0$ admit following currents $c_0\geq 0$ for $s<0$ and adverse currents $-\frac g {\hat{f}}\leq ce^{2kr_0}<c_0<0$ for $s<0$ close to zero.

II. In Southern hemisphere

II-1. The flows with the wave phase speed $c=c_+>0$ admit following currents $-\frac g {\hat{f}}<c_0\leq0$ for $s<0$ and adverse currents $0<c_0<ce^{2kr_0}$ for $s<0$ close to zero.

II-2. The flows with the wave phase speed $c=c_-<0$ admit following currents $c_0\geq 0$ for $s>0$ and adverse currents $-\frac g {\hat{f}}\leq ce^{2kr_0}<c_0<0$ for $s>0$ close to zero.

This solution represents three-dimensional, nonlinear geophysical trapped waves. The free surface $z=\eta(x,y,t)$ is implicitly prescribed at $s=0$ by setting $r=r_0$ in \eqref{3.2}, and for other fixed latitude $s\in[-s_0,0]$ or $s\in[0,s_0]$, there exists a unique value $r(s)<r_0$ which implicitly prescribes the free surface $z=\eta(x,s,t)$ by waves of setting $r=r(s)$ in \eqref{3.2}.
\end{proposition}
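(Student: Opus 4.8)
The plan is to establish the exact-solution claim first and then to read off the admissibility ranges from the three constraints \eqref{3.32}--\eqref{3.34}. For the first part most of the work is already in place: the velocity \eqref{3.6} and material acceleration \eqref{3.7} come directly from \eqref{3.2}, and the pressure \eqref{3.16} was constructed so that the Lagrangian relations \eqref{3.10} --- equivalently the Euler equations \eqref{3.1} pulled back through the change of variables \eqref{3.9} --- hold once the dispersion relation \eqref{3.15} is imposed. I would then record the remaining checks. Incompressibility \eqref{2.2b} holds because $\det J = 1 - e^{2\xi}$ depends only on the labels $(r,s)$, so $\frac{D}{Dt}(\det J)=0$; the same expression is strictly positive on the label domain \eqref{3.3}, making \eqref{3.2} a genuine diffeomorphism onto the fluid domain. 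Mass conservation \eqref{2.2c} is trivial for constant $\rho$, the decay \eqref{2.2f} follows from $e^{\xi}\to 0$ as $r\to-\infty$ in \eqref{3.6}, the kinematic condition \eqref{2.2d} is automatic since $\{r=r(s)\}$ is a material surface, and \eqref{2.2e} is precisely the defining relation $P(r(s),s)=P_0$, whose solvability is the substance of the case analysis.

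For the admissibility ranges I would fix the hemisphere and the branch $c=c_\pm$, use \eqref{3.32} (which encodes $m(s)\ge0$) to pin down the sign of $s$, and then test \eqref{3.33} (existence of $r(s)<r_0$) and \eqref{3.34} (the trapping sign of $r'$). In the Northern hemisphere $f>0$, so $c=c_+>0$ forces $s>0$ and $c=c_-<0$ forces $s<0$. The following-current case ($cc_0<0$) is the easy one: for $c=c_+>0$, $s>0$, $c_0\le0$ the bracket in \eqref{3.34} is negative because $c_0\le0<c\,e^{2k[r-m(s)]}$, while in \eqref{3.33} one has $A_1<0$ from \eqref{3.27} together with $A_2=fc_0s\le0$, so both hold for \emph{every} $s>0$; combined with the physical bound \eqref{3.12} this gives $-g/\hat{f}<c_0\le0$. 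The branch $c=c_-<0$ is identical sign-bookkeeping and yields $c_0\ge0$.

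The adverse-current case ($cc_0>0$) is the \emph{main obstacle}, since \eqref{3.33} and \eqref{3.34} then survive only for $s$ near $0$. On the surface near $s=0$ one has $r(s)\to r_0$ and $m(s)\to0$, so \eqref{3.34} reduces to $c_0<c\,e^{2kr_0}$ for $c=c_+$. The point is to show \eqref{3.33} is compatible and not strictly stronger, which I would do by a first-order expansion: from $e^{-2kfcs/(\hat{f}c_0+g)}-1=-2kfcs/(\hat{f}c_0+g)+O(s^2)$ one gets $A_1=-e^{2kr_0}fcs+O(s^2)$, hence $A_1+A_2=fs\,(c_0-c\,e^{2kr_0})+O(s^2)$, which for $f>0$, $s>0$ is negative for small $s$ exactly when $c_0<c\,e^{2kr_0}$. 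Thus both constraints collapse to $0<c_0<c\,e^{2kr_0}$ for $s>0$ close to zero. For $c=c_-<0$ the same expansion (now with $s<0$) delivers $c\,e^{2kr_0}<c_0<0$; to justify writing the left endpoint as $-g/\hat{f}\le c\,e^{2kr_0}$ I would verify $\hat{f}c_-e^{2kr_0}+g\ge0$, which follows from $e^{2kr_0}<1$ together with the smallness of $\hat{f}=2\Omega\cos\phi$ against $g$ (the offending term is $O(\Omega)$, whereas $g\approx9.81$), so the bound holds throughout the physical regime.

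The Southern hemisphere requires no fresh computation: the reflection $\phi\mapsto-\phi$ sends $f\mapsto-f$ but leaves $\hat{f}$, the relation \eqref{3.15}, and hence $c_\pm$ invariant, so \eqref{3.32} reverses the sign of $s$ and the four Northern inequalities transform onto their Southern counterparts under $s\mapsto-s$. Finally, for each admissible $s$ the existence of a unique $r(s)<r_0$ prescribing the free surface follows from the monotonicity \eqref{3.23}, the limit \eqref{3.24}, and the strict inequality \eqref{3.33} (equivalently \eqref{3.25}) just established: $h(\cdot,s)$ decreases continuously from $+\infty$ down to a value strictly below the target $h(r_0,0)$, so by the intermediate value theorem it meets the target at exactly one $r(s)<r_0$, which implicitly determines the trapped free surface $z=\eta(x,s,t)$ through \eqref{3.2}.
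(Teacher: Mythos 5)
Your proposal is correct and follows essentially the same route as the paper's proof: using \eqref{3.32} to fix the sign of $s$, verifying the following-current ranges by direct sign checks of \eqref{3.33}--\eqref{3.34}, and handling the adverse-current case by a local analysis of $A_1+A_2$ near $s=0$ (your first-order Taylor expansion is precisely the paper's derivative conditions \eqref{3.37} and \eqref{3.40}, since $A_1(0)+A_2(0)=0$), with uniqueness of $r(s)$ obtained from the monotonicity \eqref{3.23}, the limit \eqref{3.24} and the intermediate value theorem, and the Southern hemisphere obtained by the sign flip $f\mapsto -f$. Your extra completeness checks (incompressibility via $\det J=1-e^{2\xi}$, mass conservation, the decay condition, and the justification of the endpoint bound $-g/\hat{f}\le c_-e^{2kr_0}$, which the paper asserts without argument) go slightly beyond what the paper records, but the core case analysis is the same.
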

\begin{proof}
We just discuss the Case I as the Case II can be treated in a similar way. In the Northern hemisphere, we have $f>0$.

1. For $c=c_+>0$, we obtain from \eqref{3.32} that $s>0$ and from \eqref{3.33}-\eqref{3.34} that
\begin{align}
&\frac{\hat{f}c_0+g}{2k}e^{2kr_0}\left[e^{-2k\frac{fcs}{\hat{f}c_0+g}}-1\right]+fc_0s
=A_1(s)+A_2(s)<0,\quad \text{for}\;s>0, \label{3.35}\\
&f\left[c_0-ce^{2k[r-\frac{fcs}{\hat{f}c_0+g}]}\right]<0,\quad \text{for}\;s>0.
\label{3.36}
\end{align}
The above conditions are satisfied obviously for $-\frac g {\hat{f}}<c_0\leq0$. In terms of $c_0>0$, it can be seen that the condition \eqref{3.35} will break down for large enough values of $s>0$. Noting that we are interested in water waves propagating in a relatively narrow ocean strip, it is possible for \eqref{3.35} to hold for a small value $s$ depending on the size of the current $c_0>0$. Since $A_1(0)+A_2(0)=0$, we must necessarily have
\begin{equation}\label{3.37}
A_1'(s)+A_2'(s)=f(c_0-ce^{2k[r_0-\frac{fcs}{\hat{f}c_0+g}]})<0
\end{equation}
for $s>0$ close to zero to ensure the validity of \eqref{3.35}. For a given $0<c_0<ce^{2kr_0}$, \eqref{3.37} holds for some $s\in(0,s_1]$ and accordingly \eqref{3.36} holds for $s\in(0,s_0]$ with $s_0<s_1$.

2. For $c=c_-<0$, we get from \eqref{3.32} that $s<0$ and \eqref{3.33}-\eqref{3.34} reduces to
\begin{align}
&\frac{\hat{f}c_0+g}{2k}e^{2kr_0}\left[e^{-2k\frac{fcs}{\hat{f}c_0+g}}-1\right]+fc_0s
=A_1(s)+A_2(s)<0, \quad \text{for}\;s<0, \label{3.38}\\
&f\left[c_0-ce^{2k[r-\frac{fcs}{\hat{f}c_0+g}]}\right]>0,\quad \text{for}\;s<0.
\label{3.39}
\end{align}
The above conditions are satisfied obviously for $c_0\geq0$. In terms of $-\frac g {\hat{f}}<c_0<0$, it can be seen that the condition \eqref{3.38} will break down for negative enough values of $s<0$. The consideration about water waves propagating in a relatively narrow ocean strip makes it possible for \eqref{3.38} to hold for a small value $s$ depending on the size of the current $c_0<0$. Since $A_1(0)+A_2(0)=0$, we must still necessarily have
\begin{equation}\label{3.40}
A_1'(s)+A_2'(s)=f(c_0-ce^{2k[r_0-\frac{fcs}{\hat{f}c_0+g}]})>0
\end{equation}
for $s<0$ close to zero to ensure the validity of \eqref{3.38}. For a given $0>c_0>ce^{2kr_0}\geq-\frac g {\hat{f}}$, \eqref{3.40} holds for some $s\in[-s_1,0)$ and accordingly \eqref{3.39} holds for $s\in[-s_0,0)$ with $s_0<s_1$. This completes the proof of Case I.
\end{proof}

\begin{remark} For water waves propagating zonally near the Equator and covering both the northern and southern hemispheres, where we assume $\sin\phi\approx\phi\neq0$, the conclusion of I-1 and II-1 for $c=c_+>0$ is in accordance with the result for equatorial geophysical water waves with underlying current by Henry in \cite{D3}, whereas the solutions we constructed here admit flows with wave speed $c=c_-<0$.
\end{remark}

\subsection{Stratification}
In the absence of an underlying current $c_0=0$, we can accommodate a stratified fluid through assuming that the density has a steady function dependence of the form $\rho(x,y,z,t)=\rho(x-ct,y,z)$. The equation of mass conservation \eqref{2.2c} recasts to
\begin{equation}\label{3.50}
(u-c)\rho_x+w\rho_z=c\left(\rho_x(e^\xi\cos\theta-1)+\rho_ze^\xi\sin\theta\right)=0,
\end{equation}
and a direct computation leads to
\begin{equation}\label{3.51}
\rho_q=\rho_x\frac {\partial x}{\partial q}+\rho_y \frac {\partial y}{\partial q}
+\rho_z\frac {\partial z}{\partial q}
=\rho_x(1-e^{\xi}\cos\theta)-\rho_ze^{\xi}\sin\theta=0.
\end{equation}
Therefore the density $\rho$ is independent of $q$. Defining the density  function by
\begin{equation}\label{3.52}
\rho(r,s)=F(\frac {e^{2\xi}} {2k}-r),
\end{equation}
where $F:(0,\infty)\rightarrow (0, \infty)$ is continuously differentiable and non-decreasing and the pressure function \eqref{3.16} is suitably adapted, for $\mathcal{F}'=F$ with $\mathcal{F}(0)=0$, the function
\begin{equation}\label{3.53}
P=g\mathcal{F}(\frac {e^{2\xi}} {2k}-r)+P_0-g\mathcal{F}(\frac {e^{2kr_0}} {2k}-r_0).
\end{equation}

\vspace{0.5cm}
\noindent {\bf Acknowledgements.}
The work of Fan is supported by a NSFC Grant No. 11701155, NSF of Henan Normal University, Grant No. 2016QK03, and the Startup Foundation for Introducing Talent of Henan Normal University, Grant No. qd16150. The work of Gao is partially supported  by the NSFC grant No. 11531006 and PAPD of Jiangsu Higher Education Institutions. The work of Xiao is partially supported by the Fundamental Research Funds for the Central Universities (Grant No. KYZ201748).

\end{document}